\def\real{{\mathchoice%
{\hbox{\rm\setbox1=\hbox{I}\copy1\kern-.45\wd1 R}}
{\hbox{\rm\setbox1=\hbox{I}\copy1\kern-.45\wd1 R}}
{\hbox{\scriptsize\rm\setbox1=\hbox{I}\copy1\kern-.45\wd1 R}}
{\hbox{\scriptsize\rm\setbox1=\hbox{I}\copy1\kern-.45\wd1 R}}}}
\def\Zint{{\mathchoice{\setbox1=\hbox{\sf Z}\copy1\kern-.75\wd1\box1}
{\setbox1=\hbox{\sf Z}\copy1\kern-.75\wd1\box1}
{\setbox1=\hbox{\scriptsize\sf Z}\copy1\kern-.75\wd1\box1}
{\setbox1=\hbox{\scriptsize\sf Z}\copy1\kern-.75\wd1\box1}}}
\newcommand{\complex}{ \hbox{\rm C\kern-0.45em\rule[.07em]{.02em}{.58em}%
\kern 0.43em}}
\newcommand{\be}{\begin{equation}}                         
\newcommand{\ee}{\end{equation}}
\newcommand{\beqr}{\begin{eqnarray}}
\newcommand{\eeqr}{\end{eqnarray}}
\newcommand{\beqrx}{\begin{eqnarray*}}
\newcommand{\eeqrx}{\end{eqnarray*}}
\newcommand{\ba}{\left[ \begin{array}}
\newcommand{\ea}{\\ \end{array} \right]}
\newcommand{\ei}{\end{itemize}}
\newtheorem{theorem}{Theorem}
\newtheorem{corollary}[theorem]{Corollary}
\newtheorem{lemma}[theorem]{Lemma}
\newcommand{\E}[1]{{\mathbb E}\left[ #1 \right]}
\DeclareMathOperator*{\argmin}{arg\,min}
\title{\LARGE \bf
A Stochastic Interpretation of Stochastic Mirror Descent:\\ Risk-Sensitive Optimality
}
\author{Navid Azizan and Babak Hassibi
\thanks{This work was supported in part by the National Science Foundation under grants CCF-1423663, CCF-1409204 and ECCS-1509977, by a grant from Qualcomm Inc., by NASA's Jet Propulsion Laboratory through the President and Director's Fund, and by an Amazon (AWS) AI Fellowship.}
\thanks{N. Azizan is with the Department of Computing and Mathematical Sciences,
        California Institute of Technology, Pasadena, CA 91125, USA
        {\tt\small azizan@caltech.edu}}%
\thanks{B. Hassibi is with the Department of Electrical Engineering, California Institute of Technology, Pasadena, CA 91125, USA
        {\tt\small hassibi@caltech.edu}}%
}
\begin{document}

\maketitle
\thispagestyle{empty}
\pagestyle{empty}

\begin{abstract}
Stochastic mirror descent (SMD) is a fairly new family of algorithms that has recently found a wide range of applications in optimization, machine learning, and control. It can be considered a generalization of the classical stochastic gradient algorithm (SGD), where instead of updating the weight vector along the negative direction of the stochastic gradient, 
the update is performed in a ``mirror domain'' defined by the gradient of a (strictly convex) potential function.
This potential function, and the mirror domain it yields, provides considerable flexibility in the algorithm compared to SGD. While many properties of SMD have already been obtained in the literature, in this paper we exhibit a new interpretation of SMD, namely that it is a risk-sensitive optimal estimator when the unknown weight vector and additive noise are non-Gaussian and belong to the exponential family of distributions. The analysis also suggests a modified version of SMD, which we refer to as symmetric SMD (SSMD). The proofs rely on some simple properties of Bregman divergence, which allow us to extend results from quadratics and Gaussians to certain convex functions and exponential families in a rather seamless way.
\end{abstract}

\section{Introduction}

Stochastic mirror descent (SMD) has become one of the most widely used families of algorithms for optimization, machine learning, and beyond \cite{nemirovski1983problem, beck2003mirror,cesa2012mirror,zhou2017stochastic,nedic2014stochastic,azizan2019characterization,raginsky2012continuous}, which includes the popular stochastic gradient descent (SGD) as a special case. The convergence behavior of such algorithms have been extensively studied in the literature \cite{nemirovski2009robust,nesterov2009primal}, under various assumptions.
Several other properties and interpretations of SMD have recently been proven in the literature\cite{xiao2010dual,gunasekar2018characterizing}. In earlier work, we have demonstrated a fundamental \emph{conservation law} for SMD and have used it to establish properties such as \emph{minimax optimality}, \emph{deterministic convergence}, and \emph{implicit regularization}  \cite{azizan2019stochastic,azizan2019characterization}. The main contribution of this paper is to provide a new stochastic interpretation of SMD, i.e., that it is \emph{risk-sensitive optimal}. This generalizes a similar result about SGD in the literature \cite{hassibi1999indefinite,hassibi1996h}. We also propose a new ``more symmetric" version of SMD, called  symmetric SMD (SSMD), which is suggested by our
analysis.

The paper is organized as follows. We review the main properties of SMD and the notion of Bregman divergence in Section~\ref{sec:background}. The risk-sensitive optimality result and its proof, as well as the new SSMD algorithm are provided is Section~\ref{sec:main}. We finally mention another stochastic result about SMD in Section~\ref{sec:other_stochastic}, and conclude in Section~\ref{sec:conclusion}.
\section{Background}\label{sec:background}

Consider a separable loss function of some unknown parameter (or weight) vector $w\in\mathbb{R}^p$:
\[ L(w) = \sum_{i=1}^nL_i(w), \]
where the $L_i(\cdot)$ are called the instantaneous (or local) loss functions, and where our goal is to minimize $L(\cdot)$ over $w$. For example, the conventional gradient descent (GD) algorithm can be used as an attempt to perform such minimization. A generalization of GD, called the {\em mirror descent} (MD) algorithm, was first introduced by Nemirovski and Yudin \cite{nemirovski1983problem} and can be described as follows. Consider a strictly convex differentiable function $\psi(\cdot)$, called the {\em potential function}. Then MD is given by the following recursion
\be
\nabla \psi(w_i) = \nabla\psi(w_{i-1})-\eta\nabla L(w_{i-1}),~~~w_0
\label{MD}
\ee
where $\eta>0$ is known as the step size or learning rate. Note that, due to the strict convexity of $\psi(\cdot)$, the gradient $\nabla\psi(\cdot)$ defines an invertible map so that the recursion in (\ref{MD}) yields a unique $w_i$ at each iteration. Compared to classical GD, rather than update the weight vector along the direction of the negative gradient, the update is done in the ``mirrored'' domain determined by the invertible transformation $\nabla\psi(\cdot)$. Mirror descent was originally conceived to exploit the geometrical structure of the problem by choosing an appropriate potential. Note that MD reduces to GD when $\psi(w) = \frac{1}{2}\|w\|^2$, since the gradient is simply the identity map. Other examples include the exponentiated gradient descent (aka the exponential weights) and the $p$-norms algorithm \cite{grove2001general,gentile2003robustness}. As with GD, it is straightforward to show that MD converges to a local minimum of $L(\cdot)$, provided the step size $\eta$ is small enough.

When $n$ is large, computation of the entire gradient may be cumbersome. Alternatively, in online scenarios, the entire loss function $L(\cdot)$ may not be available and only the local loss functions may be provided at each iteration. In such settings, a stochastic version of MD has been introduced, aptly called {\em stochastic mirror descent} (SMD), and which can be considered the straightforward generalization of stochastic gradient descent (SGD):
\be
\nabla \psi(w_i) = \nabla\psi(w_{i-1})-\eta\nabla L_i(w_{i-1}),~~~w_0
\label{SMD}
\ee
In the offline setting, the various instantaneous loss functions $L_i(\cdot)$ can either be drawn at random, or cycled through periodically. In the online setting, they are provided at each iteration. Unlike MD (and GD), for a fixed step size $\eta$, SMD does not generally converge, unless there exists a $w$ that {\em simultaneously} minimizes every local loss function $L_i(\cdot)$.\footnote{Since if this is not the case, even if the current estimate were at a local minimum of global loss function $L(\cdot)$, $w_*$, say, any of the local gradients $\nabla L_i(w_*)$ could be nonzero which would move us away from $w_*$.} For this reason, SMD with vanishing learning rate has also been considered
\be
\nabla \psi(w_i) = \nabla\psi(w_{i-1})-\eta_i\nabla L_i(w_{i-1}),~~~w_0
\label{MDvan}
\ee
where the learning rate is chosen such that $\eta_i\rightarrow 0$. With a vanishing learning rate it is not surprising that one can attain convergence (since after a while the algorithm is barely updating the weight vector)---what is more interesting is the fact that under suitably decaying rates one can obtain convergence to a local minimum of $L(\cdot)$ (more on this below). 

\subsection{Bregman Divergence}

For any given strictly convex differentiable potential function $\psi(\cdot)$, the {\em Bregman divergence} is defined as
\be
D_\psi(w,w') = \psi(w)-\psi(w')-\nabla\psi(w')^T(w-w') .
\ee
In other words, the Bregman divergence is the difference between the value of the function $\psi(\cdot)$ at a point $w$ and the value of its linear (or first order) approximation around another point $w'$ (see Fig.~\ref{fig:bregman}). Since a defining property of a convex function is that its linear approximations always lies below it, we have that $D_\psi(w,w')\geq 0$. Furthermore, since $\psi(\cdot)$ is strictly convex, we have that $D_\psi(w,w') = 0$ iff $w = w'$. Finally, it can be observed that $D_\psi(\cdot,\cdot)$ is convex in its first argument (but not necessarily in the second).

Since the Bregman divergence retains the quadratic (and higher order) terms in the error of the linear approximation of $\psi(w)$ around $w'$, it {\em inherits many of the properties of quadratics}. For example,  the classical ``law of cosines''
\[  \|w-w'\|^2 = \|w-w''\|^2+\|w''-w'\|^2-2(w'-w'')^T(w-w'') \]
generalizes to
\begin{multline}
D_\psi(w,w') = D_\psi(w,w'')+D_\psi(w'',w') \\
-\left(\nabla\psi(w')-\nabla\psi(w'')\right)^T(w-w'').
\label{cosine}
\end{multline}
More important for our developments is the following generalization of ``completion-of-squares'', which we formalize as a lemma.
\begin{lemma} Let $\psi_1(\cdot)$ and $\psi_2(\cdot)$ be strictly convex differentiable functions. Then it holds that
\begin{multline}
D_{\psi_1}(w,w_1)+ D_{\psi_2}(w,w_2) = D_{\psi_1}(w_*,w_1)+ D_{\psi_2}(w_*,w_2) \\
+D_{\psi_1+\psi_2}(w,w_*), 
\end{multline}
where $w_*$ is the unique solution to the equation
\be
\nabla(\psi_1+\psi_2)(w_*) = \nabla\psi_1(w_1)+\nabla\psi_2(w_2).
\ee
\label{lem:ccos}
\end{lemma}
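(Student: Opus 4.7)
The plan is to prove the identity by direct expansion of both sides using the definition $D_\psi(u,v) = \psi(u) - \psi(v) - \nabla\psi(v)^T(u-v)$ and then to invoke the defining equation for $w_*$ to cancel the linear terms in $w$. Before doing this, I would quickly dispose of the fact that $w_*$ is well-defined: since $\psi_1$ and $\psi_2$ are each strictly convex and differentiable, so is $\psi_1+\psi_2$, and hence $\nabla(\psi_1+\psi_2)$ is an injective map. Therefore the equation $\nabla(\psi_1+\psi_2)(w_*) = \nabla\psi_1(w_1)+\nabla\psi_2(w_2)$ has at most one solution, and (modulo a mild assumption on the range of $\nabla(\psi_1+\psi_2)$, which is implicit in the statement) exactly one.

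For the main computation I would first expand the left-hand side:
\begin{multline*}
D_{\psi_1}(w,w_1) + D_{\psi_2}(w,w_2) = (\psi_1+\psi_2)(w) - \psi_1(w_1) - \psi_2(w_2)\\
 - \bigl(\nabla\psi_1(w_1)+\nabla\psi_2(w_2)\bigr)^T w + \nabla\psi_1(w_1)^T w_1 + \nabla\psi_2(w_2)^T w_2.
\end{multline*}
Using the defining relation for $w_*$, the term linear in $w$ becomes $-\nabla(\psi_1+\psi_2)(w_*)^T w$, which is the only place the coefficient of $w$ appears.

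Next I would expand the right-hand side. The term $D_{\psi_1+\psi_2}(w,w_*)$ contributes $(\psi_1+\psi_2)(w) - (\psi_1+\psi_2)(w_*) - \nabla(\psi_1+\psi_2)(w_*)^T(w-w_*)$, which is the only place $(\psi_1+\psi_2)(w)$ appears on the right and which also produces the matching $-\nabla(\psi_1+\psi_2)(w_*)^T w$. The two remaining Bregman terms expand to
\begin{multline*}
D_{\psi_1}(w_*,w_1) + D_{\psi_2}(w_*,w_2) = (\psi_1+\psi_2)(w_*) - \psi_1(w_1) - \psi_2(w_2) \\
- \bigl(\nabla\psi_1(w_1)+\nabla\psi_2(w_2)\bigr)^T w_* + \nabla\psi_1(w_1)^T w_1 + \nabla\psi_2(w_2)^T w_2.
\end{multline*}
Again applying the defining relation for $w_*$, the $(\psi_1+\psi_2)(w_*)$ terms cancel against those coming from $D_{\psi_1+\psi_2}(w,w_*)$, and the terms in $w_*$ combine as $-\nabla(\psi_1+\psi_2)(w_*)^T w_* + \nabla(\psi_1+\psi_2)(w_*)^T w_* = 0$.

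After these cancellations the two sides reduce to the same expression, which proves the lemma. I do not anticipate a serious obstacle: the entire argument is algebraic bookkeeping, and the only non-trivial step is recognizing that the defining equation for $w_*$ is precisely what is needed to pair up the linear terms in $w$ on the two sides and to annihilate the linear terms in $w_*$. This is completely analogous to the quadratic ``completion of squares'' identity, which one recovers by taking $\psi_1(w) = \tfrac{1}{2}\|w\|_{P_1}^2$ and $\psi_2(w) = \tfrac{1}{2}\|w\|_{P_2}^2$, in which case $w_* = (P_1+P_2)^{-1}(P_1 w_1 + P_2 w_2)$.
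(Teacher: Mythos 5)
Your proof is correct and follows the same route as the paper: the paper simply states that the identity "can be verified by straightforward calculation" and that uniqueness of $w_*$ follows from strict convexity of $\psi_1+\psi_2$, and your expansion of both sides together with the substitution $\nabla(\psi_1+\psi_2)(w_*)=\nabla\psi_1(w_1)+\nabla\psi_2(w_2)$ is exactly that calculation carried out in detail. Your added remark about the range of $\nabla(\psi_1+\psi_2)$ for existence is a fair (and slightly more careful) observation than the paper makes.
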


\begin{proof}
The identities can be verified by straightforward calculation. The uniqueness of $w_*$ follows from the fact that $\psi_1(\cdot)+\psi_2(\cdot)$ is strictly convex since it is the sum of two such functions.
\end{proof}

\begin{figure}[btp]
\begin{center}
\includegraphics[width=0.55\columnwidth]{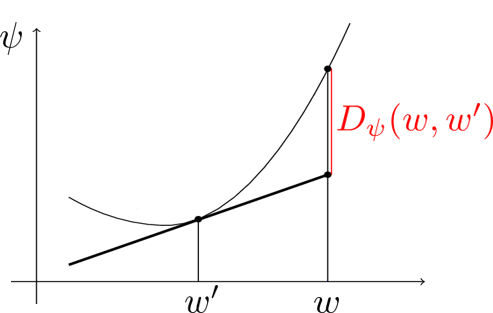}
\end{center}
\caption{Bregman divergence}
\label{fig:bregman}
\end{figure}
For example, if $\psi(w) = \|w\|^2$ then $D(w,w') = \|w-w'\|^2$, and if $\psi(p) = -H(p)$, where $p$ is a probability vector, then we get that $D_{-H}(p,p') = \sum_ip_i\log\frac{p_i}{p'_i}$ is the KL divergence (or relative entropy). 

The last fact about the Bregman divergence that we would like to mention is that a random variable $w$ that has a distribution $w\sim e^{-D_\psi(\cdot,w_0)}$ (i.e. $p(w)=ce^{-D_\psi(w,w_0)}$ for a suitable normalization constant $c$) is a member of the exponential family of distributions, and satisfies the property
\be
\mathbb{E}\nabla\psi(w)=\nabla\psi(w_0).
\ee
In other words, $w_0$ is the point whose mirror is the mean of the mirror map.

\subsection{Parametric Models}

It will now be useful to introduce some parametric models and make our loss functions more explicit. To this end, assume we have a collection of data points
\[ \left\{(x_i,y_i),i=1,\ldots n\right\} \]
where $x_i\in\mathbb{R}^m$ is the input and $y_i\in\mathbb{R}$ is the output. We will assume that the pairs $(x_i,y_i)$ are related through some parametric model
\be
y_i = f(x_i,w)+v_i,~~~~i=1,\ldots n
\label{nonlinmodel}
\ee
where $f(\cdot,\cdot)$ is a given function and represents the modeling class we are considering, $w\in\mathbb{R}^p$ is the unknown weight vector (or parameter), and $v_i$ represents both measurement noise and modeling errors. In this setting, the global loss function can be written as
\be
 L(w) = \sum_{i=1}^n\underbrace{\ell(y_i,f(x_i,w))}_{L_i(w)},
\ee
where $\ell(\cdot,\cdot)$ is a (differentiable) local loss function, with the property that $\ell(y_i,f(x_i,w)) = 0$ iff $y_i = f(x_i,w)$. Often $\ell(y_i,f(x_i,w)) = \ell(y_i-f(x_i,w))$, with $\ell(\cdot)$ convex and having a global minimum at zero. In this case,
\be
L(w) = \sum_{i=1}^n \ell(y_i-f(x_i,w)).
\label{global_loss}
\ee
For example, for quadratic loss we obtain $L(w) = \sum_{i=1}^n\frac{1}{2}(y_i-f(x_i,w))^2$. For (\ref{global_loss}), SMD takes the explicit form
\be
\small
\nabla\psi(w_i) = \nabla\psi(w_{i-1})+\eta \frac{\partial f(x_i,w_{i-1})}{\partial w}\ell'(y_i-f(x_i,w_{i-1})),~~~w_0.
\label{smd_mod}
\ee
An important special case is that of linear models
\be
y_i = x_i^Tw+v_i,~~~~i=1,\ldots, n
\label{linmodel}
\ee
where SMD takes the form
\be
\nabla\psi(w_i) = \nabla\psi(w_{i-1})+\eta x_i \ell'(y_i-x_i^Tw_{i-1}),~~~w_0.
\label{smd_linmod}
\ee

\subsection{Local and Global Interpretations of SMD}

It is straightforward to show that at each iteration, SMD solves the following optimization problem:
\be
w_i = \mbox{arg}\min_w ~D_\psi(w,w_{i-1})+\eta w^T\nabla L_i(w_{i-1}),
\label{local}
\ee
which can be verified by setting the gradient of the right hand side of (\ref{local}) to zero. What the above relation shows is that the SMD iterates try to align themselves with the direction of the instantaneous gradient, while also trying to stay close to the previous iterate in Bregman divergence. (The learning rate relatively weights these two objectives.) We refer to (\ref{local}) as the local interpretation of SMD. 

We have recently shown that SMD satisfies the following {\em local conservation law} \cite{azizan2019stochastic,azizan2019characterization}.
\begin{lemma}[Local Conservation Law \cite{azizan2019stochastic}] Even though the loss function $L_i(w) = \ell(y_i-f(x_i,w))$ may not be convex, define the Bregman divergence $D_{L_i}(w,w')$ in the usual way. Further define the quantity 
\be 
E_i(w_i,w_{i-1}) := D_{\psi-\eta L_i}(w_i,w_{i-1})+\eta L_i(w_i).
\ee
Then for each iteration of the SMD updates (\ref{smd_mod}), it holds that
\begin{multline}
D_\psi(w,w_{i-1})+\eta \ell(v_i) = D_\psi(w,w_i) \label{localcons} \\
+\eta D_{L_i}(w,w_{i-1})+E_i(w_i,w_{i-1}).
\end{multline}
\end{lemma}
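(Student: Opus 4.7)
The plan is a direct computation, starting from the three-point (cosine) identity for Bregman divergence applied to $D_\psi$ at the points $w$, $w_{i-1}$, $w_i$. Writing this identity in the form
\[
D_\psi(w,w_{i-1}) = D_\psi(w,w_i)+D_\psi(w_i,w_{i-1})-\bigl(\nabla\psi(w_{i-1})-\nabla\psi(w_i)\bigr)^T(w-w_i),
\]
the SMD recursion immediately gives $\nabla\psi(w_{i-1})-\nabla\psi(w_i)=\eta\,\nabla L_i(w_{i-1})$, so the inner-product term becomes $-\eta\,\nabla L_i(w_{i-1})^T(w-w_i)$, which I would then split as
\[
-\eta\,\nabla L_i(w_{i-1})^T(w-w_{i-1})+\eta\,\nabla L_i(w_{i-1})^T(w_i-w_{i-1}).
\]

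Next I would recognize each of these as a piece of a Bregman divergence taken with respect to $L_i$, even though $L_i$ is not assumed convex: from the definition of $D_{L_i}(\cdot,w_{i-1})$,
\[
-\eta\,\nabla L_i(w_{i-1})^T(w-w_{i-1}) = \eta\,D_{L_i}(w,w_{i-1})-\eta L_i(w)+\eta L_i(w_{i-1}),
\]
and similarly
\[
\eta\,\nabla L_i(w_{i-1})^T(w_i-w_{i-1}) = -\eta\,D_{L_i}(w_i,w_{i-1})+\eta L_i(w_i)-\eta L_i(w_{i-1}).
\]
Substituting both back, the $\pm\eta L_i(w_{i-1})$ terms cancel, and using the linearity of the Bregman divergence in its potential, $D_\psi(w_i,w_{i-1})-\eta D_{L_i}(w_i,w_{i-1})=D_{\psi-\eta L_i}(w_i,w_{i-1})$, so the remaining terms collect into exactly $E_i(w_i,w_{i-1})+\eta D_{L_i}(w,w_{i-1})+D_\psi(w,w_i)-\eta L_i(w)$.

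The final step is the identification $L_i(w)=\ell(y_i-f(x_i,w))=\ell(v_i)$, which holds whenever $w$ is the true parameter in the model $y_i=f(x_i,w)+v_i$; moving this term to the left side produces the stated identity. The computation itself is routine once the cosine identity is applied; the only subtlety is the conceptual observation that $\ell(v_i)$ in the claim is secretly $L_i(w)$ evaluated at the true weight, which is what lets the noise term appear on the left. There is no real obstacle beyond careful bookkeeping and being comfortable with the Bregman symbol $D_{L_i}$ for a nonconvex $L_i$, whose definition as a first-order Taylor remainder makes all the algebra go through unchanged.
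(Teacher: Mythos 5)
Your proof is correct: the three-point identity \eqref{cosine} with $w'=w_{i-1}$, $w''=w_i$, the substitution $\nabla\psi(w_{i-1})-\nabla\psi(w_i)=\eta\nabla L_i(w_{i-1})$ from the SMD update, the rewriting of the two inner products via the definition of $D_{L_i}$, and the final identification $L_i(w)=\ell(v_i)$ all check out, and the bracketed remainder collects into $E_i(w_i,w_{i-1})$ exactly as you say. The paper itself states this lemma without proof (deferring to the cited earlier work), and your argument is precisely the computation that the paper's own machinery --- the generalized law of cosines it introduces just above --- is set up to deliver, so there is nothing to contrast.
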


Summing the local identities in (\ref{localcons}) from time 1 to time $T$ leads to the following global conservation law
\begin{multline}
D_\psi(w,w_0)+\eta\sum_{i=1}^T\ell(v_i) = D_\psi(w,w_T) \\ +\eta\sum_{i=1}^TD_{L_i}(w,w_{i-1})
+\sum_{i=1}^TE_i(w_i,w_{i-1})  \label{globalcons}
\end{multline}
Note that (\ref{globalcons}) holds for {\em any} horizon $T$. We refer to it as the global interpretation of SMD. It can be used to show several remarkable {\em deterministic} properties of the SMD algorithm. We now mention a couple.

\begin{figure}[thpb]
\centering
\includegraphics[width=\columnwidth]{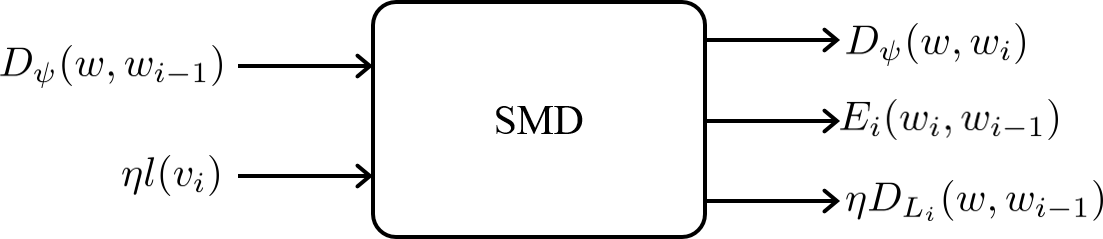}
\caption{Local Conservation Law of SMD}
\end{figure}

\subsection{Minimax Optimality of SMD}

Using the aforementioned global identity, in \cite{azizan2019stochastic,azizan2019characterization}, the following has been shown.
\begin{theorem}[Minimax Optimality \cite{azizan2019stochastic}]
For any $T$, provided $\eta$ is small enough so that $\psi(w)-\eta L_i(w)$ is convex for all $i$, then
\be
\min_{\{w_i\}}~\max_{w,\{v_i\}}~\frac{D_\psi(w,w_T)+\eta\sum_{i=1}^TD_{L_i}(w,w_{i-1})}{D_\psi(w,w_0)+\eta\sum_{i=1}^T \ell(v_i)} = 1
\ee
and SMD with learning rate $\eta$ is a minimax optimal algorithm achieving the above.
\label{thm:smd_minimix-optimal}
\end{theorem}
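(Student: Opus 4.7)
The entire argument hinges on the global conservation law (\ref{globalcons}). Rewriting it as
\begin{equation*}
D_\psi(w,w_T)+\eta\sum_{i=1}^T D_{L_i}(w,w_{i-1}) = \Bigl[D_\psi(w,w_0)+\eta\sum_{i=1}^T\ell(v_i)\Bigr] - \sum_{i=1}^T E_i(w_i,w_{i-1}),
\end{equation*}
one sees that along SMD trajectories the ratio appearing in the theorem equals $1 - \sum_i E_i / \bigl(D_\psi(w,w_0)+\eta\sum\ell(v_i)\bigr)$. The two inequalities $\min\max \leq 1$ and $\min\max \geq 1$ therefore reduce, respectively, to controlling the sign of the residuals $E_i$ along SMD and to exhibiting a worst-case adversary against any competing algorithm.

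\textbf{Upper bound.} I would first verify that under the hypothesis ``$\psi-\eta L_i$ is convex for every $i$,'' each residual $E_i(w_i,w_{i-1}) = D_{\psi-\eta L_i}(w_i,w_{i-1}) + \eta L_i(w_i)$ is nonnegative: the first piece because Bregman divergences of convex functions are nonnegative, the second because $\ell$ attains its global minimum $0$ at zero. Substituting into the displayed identity yields numerator $\leq$ denominator for every admissible pair $(w,\{v_i\})$, so SMD achieves ratio at most $1$ uniformly in the adversary, forcing $\min\max\leq 1$.

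\textbf{Lower bound and main obstacle.} For the matching $\min\max\geq 1$, I would fix an arbitrary causal algorithm producing iterates $\hat w_i$ and exhibit an adversary $(w,\{v_i\})$ driving the ratio up to $1$. The natural family of adversaries takes $v_i = 0$, collapsing the denominator to $D_\psi(w,w_0)$; for the trivial ``no-update'' estimator $\hat w_i \equiv w_0$ the ratio is already $1+\eta\sum D_{L_i}(w,w_0)/D_\psi(w,w_0)\geq 1$ by nonnegativity of Bregman divergence, and for a general algorithm a perturbative saddle-point analysis should choose $w$ along a direction in which the iterates $\hat w_T$ fail to track the true parameter. The principal obstacle is making this saddle-point argument clean in the general Bregman geometry: one cannot rely on the closed-form orthogonality identities that underlie the classical LMS/$H^\infty$ proof, so Lemma~\ref{lem:ccos} together with the strict convexity of $\psi$ is what I would deploy as a substitute, pinning down SMD as the unique minimizer of the resulting minimax problem and thereby forcing equality in the theorem.
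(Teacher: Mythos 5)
Your upper-bound argument is correct and is exactly the route the paper indicates: rewrite the global conservation law (\ref{globalcons}) as numerator $=$ denominator $-\sum_{i=1}^T E_i(w_i,w_{i-1})$, note that $E_i = D_{\psi-\eta L_i}(w_i,w_{i-1})+\eta L_i(w_i)\geq 0$ once $\psi-\eta L_i$ is convex and $\ell\geq 0$, and conclude that SMD holds the ratio at most $1$ against every admissible $(w,\{v_i\})$, hence $\min\max\leq 1$.

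The converse is where you have a genuine gap, and the specific adversary you name does not work. Setting $v_i=0$ makes the observations $y_i=x_i^Tw$ fully informative, and an algorithm that identifies $w$ after one step can then beat ratio $1$: in the scalar quadratic case with $x_i=1$ and $\eta<1$, the causal estimator $\hat w_0=w_0$, $\hat w_i=y_1=w$ for $i\geq 1$ gives numerator $\eta(w-w_0)^2$ and denominator $(w-w_0)^2$, i.e.\ ratio $\eta<1$ uniformly in $w$. So the $v_i=0$ family certifies nothing; and your observation that the no-update estimator has ratio $\geq 1$ is beside the point, since the converse must show that \emph{every} causal algorithm suffers ratio $\geq 1$ against \emph{some} disturbance. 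The adversary that actually works (and is the one behind the cited $H^\infty$/LMS proof and its SMD generalization) is the ``uninformative'' disturbance $v_i=x_i^T(\hat w_{i-1}-w)$, which forces $y_i=x_i^T\hat w_{i-1}$: the observation sequence is then generated by the algorithm alone, carries no information about $w$, and makes $\hat w_T$ a fixed vector. Under this substitution one computes $D_{L_i}(w,\hat w_{i-1})=\ell(v_i)$ (using $\ell(0)=0$, $\ell'(0)=0$), so the prediction-error sum in the numerator equals the noise sum in the denominator, and the ratio becomes $\bigl(D_\psi(w,\hat w_T)+\eta\sum_i\ell(v_i)\bigr)/\bigl(D_\psi(w,w_0)+\eta\sum_i\ell(v_i)\bigr)$; choosing $w=w_0$ (or a small perturbation thereof to avoid degeneracy) makes this at least $1$ since $D_\psi(w_0,\hat w_T)\geq 0$. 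No perturbative saddle-point analysis is needed, and Lemma~\ref{lem:ccos} plays no role in this direction.
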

Theorem \ref{thm:smd_minimix-optimal} is a generalization of the $H^\infty$-optimality of the SGD algorithm for linear models and quadratic loss, where it is referred to as LMS \cite{hassibi1999indefinite,hassibi1996h,hassibi1994hoo}, to SMD and general models and general losses.
When the potential and loss are quadratic, we have $D_\psi(w,w_0) = \|w-w_0\|^2$ and $\ell(v_i) = v_i^2$. The quantity $D_{L_i}(w,w_{i-1}) = (y_i-x_i^Tw)^2-(y_i-x_i^Tw_{i-1})^2+2x_i^T(w-w_{i-1})(y_i-x_i^Tw_{i-1})$, after some simplification, takes on the form
\[ D_{L_i}(w,w_{i-1}) = (x_i^T(w-w_{i-1}))^2, \]
which is the square of the so-called {\em prediction error}. In this case, we recover the $H^\infty$-optimality of LMS, namely that it solves 
\be
\min_{\{w_i\}}~\max_{w,\{v_i\}}~\frac{\|w-w_T\|^2+\eta\sum_{i=1}^T(x_i^T(w-w_{i-1}))^2}{\|w-w_0\|^2+\eta\sum_{i=1}^Tv_i^2}
\ee
and the optimal value is $1$.
As mentioned above, Theorem \ref{thm:smd_minimix-optimal} generalizes $H^\infty$-optimality in three ways: it holds for general potential, general loss function, and general nonlinear model.

\subsection{Convergence and Implicit Regularization}
Another interesting property of SMD, which again can be proven using the global conservation law \eqref{globalcons}, is what is referred to as \emph{implicit regularization}. In over-parameterized (underdetermined) models, which are common in compressed sensing and modern deep learning problems, there are (typically a lot) more parameters (unknowns) than data points (measurements). That means there are many parameter vectors (in fact infinitely many) that are consistent with the observations:
$$\mathcal{W} = \left\{w\in\mathbb{R}^m\mid y_i=x_i^Tw,\ i=1,\dots,n\right\}.$$
The questions of interest in this regime are (1) does SMD converge to a solution? and (2) if it does so, which solution does it converge to?
The following result answers these questions.
\begin{theorem}[Convergence to the ``Closest'' Point\cite{azizan2019stochastic}]\label{thm:SMD_implicit-reg}
Suppose $l(\cdot)$ is differentiable and convex and has a unique root at $0$, $\psi(\cdot)$ is strictly convex, and $\eta>0$ is such that $\psi-\eta L_i$ is convex for all $i$. Then for any $w_0$, the SMD iterates converge to
\begin{equation}
w_{\infty}=\argmin_{w\in\mathcal{W}} D_{\psi}(w,w_0) .
\end{equation}
\end{theorem}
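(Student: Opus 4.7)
The plan is to specialize the global conservation law (\ref{globalcons}) to the interpolating regime and then extract a Bregman-projection characterization of the limit. Since any $w\in\mathcal{W}$ satisfies $v_i=y_i-x_i^Tw=0$ and $\ell$ has its unique root at $0$, the law collapses to
\begin{equation*}
D_\psi(w,w_0) = D_\psi(w,w_T) + \eta\sum_{i=1}^T D_{L_i}(w,w_{i-1}) + \sum_{i=1}^T E_i(w_i,w_{i-1}).
\end{equation*}
Every term on the right is non-negative: the hypothesis that $\psi-\eta L_i$ is convex gives $E_i(w_i,w_{i-1})=D_{\psi-\eta L_i}(w_i,w_{i-1})+\eta L_i(w_i)\ge 0$, and convexity of $L_i(w)=\ell(y_i-x_i^Tw)$ (a convex--affine composition) gives $D_{L_i}(w,w_{i-1})\ge 0$. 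Hence $T\mapsto D_\psi(w,w_T)$ is non-increasing for every fixed $w\in\mathcal{W}$, which confines the iterates to a sublevel set of $D_\psi(w,\cdot)$ and thus to a bounded region by strict convexity of $\psi$.

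The next step is to pass to a subsequential limit. Since $\sum_i E_i(w_i,w_{i-1})\le D_\psi(w,w_0)<\infty$, we get $E_i(w_i,w_{i-1})\to 0$, which forces $L_i(w_i)\to 0$ and therefore $y_i-x_i^Tw_i\to 0$; by boundedness, some subsequence $w_{T_k}\to w_\infty$, and continuity places $w_\infty\in\mathcal{W}$. Now, telescoping the linear-model update (\ref{smd_linmod}) yields
\begin{equation*}
\nabla\psi(w_T)-\nabla\psi(w_0) \;=\; \eta\sum_{i=1}^T x_i\,\ell'(y_i-x_i^Tw_{i-1}) \;\in\; \mathrm{span}(x_1,\dots,x_n)
\end{equation*}
for every $T$, so by continuity of $\nabla\psi$ the same containment holds in the limit at $w_\infty$. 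Combined with $w_\infty\in\mathcal{W}$, this is precisely the KKT condition for $\min_{w\in\mathcal{W}} D_\psi(w,w_0)$, since $\nabla_w D_\psi(w,w_0)=\nabla\psi(w)-\nabla\psi(w_0)$. Strict convexity of $D_\psi(\cdot,w_0)$ makes this condition sufficient and the minimizer unique, so every subsequential limit of $\{w_T\}$ must coincide with $w_\infty$. Applying monotonicity of $D_\psi(w_\infty,w_T)$ at this specific $w_\infty\in\mathcal{W}$, together with $D_\psi(w_\infty,w_{T_k})\to 0$ along the subsequence (by continuity of $D_\psi$ in its second argument), gives $D_\psi(w_\infty,w_T)\to 0$, and hence $w_T\to w_\infty$ by strict convexity of $\psi$.

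The delicate step, in my view, is the last one: monotonicity of $D_\psi(\cdot,w_T)$ holds only at points of $\mathcal{W}$, so one cannot invoke it at a candidate limit until after independently proving that the limit lies in $\mathcal{W}$ and is optimal; the argument must therefore first extract a cluster point, identify it via KKT, and only then use monotonicity at that specific point to upgrade from subsequential to full convergence. A minor bookkeeping issue is the scheduling of the $L_i$'s in SMD: to conclude $w_\infty\in\mathcal{W}$ from the fact that $y_i-x_i^Tw_i\to 0$ along the active index, one needs each $i\in\{1,\dots,n\}$ to be revisited infinitely often (e.g.\ cyclic order), which is the standard assumption in this setting.
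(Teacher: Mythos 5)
Your proof is correct and follows essentially the same route the paper indicates (and that the cited reference \cite{azizan2019stochastic} carries out): specialize the global conservation law \eqref{globalcons} to $w\in\mathcal{W}$ so that the $\ell(v_i)$ terms vanish, use non-negativity of $D_{L_i}$ and $E_i$ to get monotonicity of $D_\psi(w,w_T)$ and boundedness, extract a cluster point in $\mathcal{W}$, and identify it as the Bregman projection of $w_0$ via the fact that $\nabla\psi(w_T)-\nabla\psi(w_0)$ remains in $\mathrm{span}(x_1,\dots,x_n)$. The caveats you raise (compact sublevel sets of $D_\psi(w,\cdot)$ and each index being revisited infinitely often) are shared with the original argument and are appropriately flagged.
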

\begin{corollary}[Implicit Regularization\cite{azizan2019stochastic}]
In particular, for the initialization $w_0=\argmin_{w\in\mathbb{R}^m} \psi(w)$, under the conditions of Theorem~\ref{thm:SMD_implicit-reg}, the SMD iterates converge to
\begin{equation}\label{eq:minimum_potential}
w_{\infty}=\argmin_{w\in\mathcal{W}} \psi(w) .
\end{equation}
\end{corollary}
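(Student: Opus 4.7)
The plan is to reduce the corollary to Theorem~\ref{thm:SMD_implicit-reg} via a direct computation showing that the Bregman-divergence objective in that theorem collapses to $\psi(w)$ up to an additive constant when the initialization is chosen at the unconstrained minimizer of $\psi$.

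First I would invoke Theorem~\ref{thm:SMD_implicit-reg} to conclude that, under the stated hypotheses, the SMD iterates converge to $w_\infty = \argmin_{w\in\mathcal{W}} D_\psi(w,w_0)$. The only thing left to do is therefore identify this argmin when the initialization is $w_0 = \argmin_{w\in\mathbb{R}^m}\psi(w)$.

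Next I would exploit the first-order optimality condition at $w_0$. Since $\psi$ is differentiable and strictly convex, $w_0 = \argmin_{w\in\mathbb{R}^m}\psi(w)$ forces $\nabla\psi(w_0)=0$. Substituting into the definition of Bregman divergence,
\begin{equation}
D_\psi(w,w_0) = \psi(w) - \psi(w_0) - \nabla\psi(w_0)^T(w-w_0) = \psi(w) - \psi(w_0).
\end{equation}
Because $\psi(w_0)$ is a constant independent of $w$, the minimization over $w\in\mathcal{W}$ is unaffected, and hence $\argmin_{w\in\mathcal{W}} D_\psi(w,w_0) = \argmin_{w\in\mathcal{W}} \psi(w)$, which is exactly \eqref{eq:minimum_potential}.

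No step here is really an obstacle; the main conceptual point is simply recognizing that choosing $w_0$ as the unconstrained potential-minimizer makes $\nabla\psi(w_0)$ vanish, which in turn kills the linear term in $D_\psi(\cdot,w_0)$. The only subtlety worth noting is existence of $\argmin_{w\in\mathbb{R}^m}\psi(w)$: this is tacitly assumed by the hypothesis of the corollary, but if one wished to be thorough one could add the assumption that $\psi$ is coercive (which is the case for all standard potentials, e.g. squared norms, $p$-norms to a power, or negative entropy on the simplex).
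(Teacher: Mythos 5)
Your argument is correct and is exactly the intended one: the paper states the corollary without proof as an immediate consequence of Theorem~\ref{thm:SMD_implicit-reg}, and the step you supply---that $w_0=\argmin_w\psi(w)$ gives $\nabla\psi(w_0)=0$, so $D_\psi(w,w_0)=\psi(w)-\psi(w_0)$ and the two argmins coincide---is the standard justification. Your remark about tacitly assuming the unconstrained minimizer exists is a fair and minor caveat; nothing further is needed.
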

This means that running SMD, without any (explicit) regularization, results in a solution that has the smallest potential $\psi(\cdot)$ among all solutions, i.e., SMD implicitly regularizes the solution with $\psi(\cdot)$. In principle, one can choose the potential function  for {\em any} desired convex regularization. For example, we can find the maximum entropy solution by taking the potential to be the negative entropy, or do compressed sensing with $\psi(w) = \|w\|_{1+\epsilon}$ \cite{azizan2019stochastic,azizan2019characterization}.

We should remark that the result extends to quasi-convex losses $\ell(\cdot)$, and it holds locally (in an approximate sense) even for nonlinear models (non-convex cost).

\begin{figure}[tbp]
\begin{center}
\includegraphics[scale=.28]{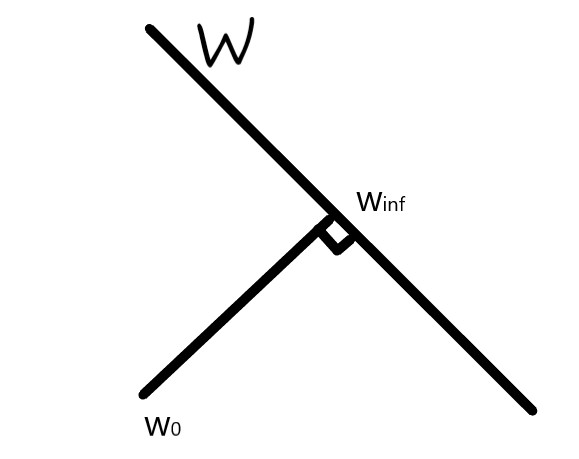}
\caption{$w_{\infty}$ is the closest solution (among all solutions $\mathcal{W}$) to $w_0$. Note that this picture is only for the Euclidean distance; in general the ``closest'' is measured in Bregman divergence.}
\end{center}
\end{figure}

\section{Main Results}\label{sec:main}

The results about SMD discussed in the previous section were deterministic. In this section, we give a stochastic interpretation of SMD, and show that it is risk-sensitive optimal.

\subsection{Risk-Sensitive Optimality of SMD}

Consider a stochastic model $y_i=x_i^Tw+v_i, i\geq 1$, where $w$ and $\{v_i\}$ are independent random variables with distributions $w\sim e^{-\frac{1}{\eta}D_\psi(\cdot,w_0)}$ and $v_i\sim e^{-\ell(\cdot)}$, which are members of the exponential family (note that when the potential function $\psi(\cdot)$ and the loss $\ell(\cdot)$ are square, both of these are Gaussian). A conventional quadratic estimator is one that minimizes the expected sum of squared prediction errors, i.e., 
\begin{equation}\label{min_quadratic}
\min_{\{z_i\}}\mathbb{E}_{|\{y_i\}}\left[\frac{1}{2}\sum_{i=1}^T(x_i^Tw-z_{i})^2\right] ,
\end{equation}
where the expectation is taken over $w$ and $\{v_i\}$ conditioned on the observations, and each $z_i$ in the minimization can only be a function of observations until time $i-1$.
For various problems, one may be interested in cost functions more general than quadratic, i.e.,
\begin{equation}\label{min_bregman}
\min_{\{z_i\}}\mathbb{E}_{|\{y_i\}}\left[\sum_{i=1}^TD_{\ell}(y_i-x_i^Tw,y_i-z_{i})\right] .
\end{equation}
The estimators that solve problems \eqref{min_quadratic} and \eqref{min_bregman} are referred to as ``risk-neutral'' estimators.

An alternative criterion is the ``risk-sensitive'' (or exponential cost) criterion, which was first introduced in \cite{jacobson1973optimal} and studied in \cite{speyer1974optimization,whittle1990risk,speyer1992optimal}. In particular, an estimator that solves the problem
\begin{equation}\label{min_exp_quadratic}
\min_{\{z_i\}}\mathbb{E}_{|\{y_i\}}\exp\left(\frac{1}{2}\sum_{i=1}^T(x_i^Tw-z_{i})^2\right) ,
\end{equation}
is called a ``risk-averse'' estimator. The reason is that in such a criterion, very large weights are placed on large errors, and hence, the estimator is more concerned about large values of error (their rare occurrence) than the moderate values of error.

Similar as in \eqref{min_bregman}, one can consider exponential cost of errors measured with a more general distance than quadratic, i.e.,
\begin{equation}\label{min_exp_bregman}
\min_{\{z_i\}}\mathbb{E}_{|\{y_i\}}\exp\left(\sum_{i=1}^TD_{\ell}(y_i-x_i^Tw,y_i-z_{i})\right) ,
\end{equation}

It has been shown in \cite{hassibi1996h,hassibi1999indefinite} that SGD for square loss (aka LMS) solves the problem \eqref{min_exp_quadratic}. In other words, LMS is risk-sensitive optimal. Formally, the result is as follows.
\begin{theorem}[Hassibi et al.\cite{hassibi1999indefinite}]\label{thm:SGD_risk-sensitive}
Consider the model $y_i=x_i^Tw+v_i, i\geq 1$, where $w$ and $\{v_i\}$ are independent Gaussian random variables with means $w_0$ and $0$ and variances $\eta I$ and $I$, respectively. Further, suppose that $\{x_i\}$ are persistently exciting and $0<\eta<\frac{1}{\|x_i\|^2}, \forall i$. Then the solution to the following optimization problem
$$\min_{\{z_i\}}\mathbb{E}_{|\{y_i\}}\exp\left(\frac{1}{2}\sum_{i=1}^T(x_i^Tw-z_i)^2\right)$$
where the expectation is taken over $w$ conditioned on the observations, and $z_i$ is only allowed to depend on observations up to time $i-1$, is given by $z_i=x_i^Tw_{i-1}$, where $\{w_i\}$ are the SGD iterates.
\end{theorem}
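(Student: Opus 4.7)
The plan is to evaluate the inner conditional expectation in closed form using the Gaussian structure, thereby reducing the stochastic problem to a deterministic one, and then to solve the deterministic problem by backward dynamic programming, identifying the causal minimizer as the LMS iterate. By Bayes' rule, the posterior $p(w\mid y_1,\ldots,y_T)$ is proportional to the product of the Gaussian prior $\exp(-\tfrac{1}{2\eta}\|w-w_0\|^2)$ and the Gaussian likelihood $\exp(-\tfrac12\sum(y_i-x_i^Tw)^2)$, so the conditional expectation in the objective is a ratio of Gaussian integrals in $w$, with the policy $\{z_i\}$ appearing only in the numerator.

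The key algebraic observation is that in the numerator integrand
\[
\exp\!\Big(\tfrac12\sum_{i=1}^T(x_i^Tw-z_i)^2-\tfrac{1}{2\eta}\|w-w_0\|^2-\tfrac12\sum_{i=1}^T(y_i-x_i^Tw)^2\Big),
\]
the quadratic-in-$w$ contribution $+\tfrac12 w^TX^TXw$ coming from the risk-sensitive cost cancels exactly against the $-\tfrac12 w^TX^TXw$ coming from the log-likelihood. What remains in the exponent is a negative-definite quadratic in $w$ with curvature $-\tfrac{1}{\eta}I$ inherited purely from the prior, so completing the square and integrating out $w$ yields
\[
\mathbb{E}_{|y}\exp\!\Big(\tfrac12\sum_{i=1}^T(x_i^Tw-z_i)^2\Big)=C(y)\,\exp\!\Big(\tfrac{\eta}{2}\|c\|^2+\tfrac12\sum_{i=1}^T z_i^2\Big),
\]
where $c=\tfrac{1}{\eta}w_0+\sum_{i=1}^T(y_i-z_i)x_i$ and $C(y)$ does not depend on $\{z_i\}$. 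The original problem is therefore equivalent to minimizing the deterministic function $\Phi(\{z_i\})=\tfrac{\eta}{2}\|c\|^2+\tfrac12\sum z_i^2$ over causal policies.

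Next I would recognize the LMS recursion hidden inside $\Phi$: at the candidate $z_i=x_i^Tw_{i-1}$, the update $w_i=w_{i-1}+\eta x_i(y_i-x_i^Tw_{i-1})$ yields by induction $\tfrac{1}{\eta}w_0+\sum_{j=1}^i(y_j-x_j^Tw_{j-1})x_j=w_i/\eta$, so $c=w_T/\eta$ at this choice and $\Phi$ evaluates to $\tfrac{1}{2\eta}\|w_T\|^2+\tfrac12\sum(x_i^Tw_{i-1})^2$. By the global conservation law (\ref{globalcons}), specialized to quadratic $\psi$ and $\ell$ and instantiated at $w=0$, this quantity equals $\tfrac{1}{2\eta}\|w_0\|^2+\tfrac12\sum y_i^2-\tfrac12\sum(1-\eta\|x_i\|^2)(y_i-x_i^Tw_{i-1})^2$, providing a clean closed-form benchmark for the optimal cost.

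The remaining and hardest step is to prove that the LMS policy is in fact the causal minimizer of $\Phi$. Note that the unconstrained minimizer of $\Phi$ is the non-causal smoother $z_i=x_i^T(I+\eta X^TX)^{-1}(w_0+\eta X^Ty)$, so the causality constraint is strictly binding and must be handled with care. I would attack this via backward dynamic programming on the running statistic $\xi_{i-1}=\tfrac{1}{\eta}w_0+\sum_{j<i}(y_j-z_j)x_j$: defining $V_{T+1}(\xi_T)=\tfrac{\eta}{2}\|\xi_T\|^2$ and propagating the recursion $V_i(\xi_{i-1})=\min_{z_i}\mathbb{E}_{y_i\mid y_1,\ldots,y_{i-1}}[\tfrac12 z_i^2+V_{i+1}(\xi_{i-1}+(y_i-z_i)x_i)]$, the Gaussian conditional distribution of $y_i$ given the past keeps every $V_i$ quadratic-plus-constant in the sufficient statistic, and the induction identifies the optimal control as $z_i=x_i^Tw_{i-1}$ precisely because the prior covariance $\eta I$ and unit observation-noise covariance conspire to match the LMS gain $\eta x_i$. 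The technical delicacy lies in establishing an innovations-orthogonality property that makes the linear-in-deviation term vanish; here the step-size condition $\eta<1/\|x_i\|^2$ together with persistent excitation is exactly what ensures the quadratic forms arising at each DP step are positive definite, closing the induction.
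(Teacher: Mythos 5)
Your reduction is correct as far as it goes, and it is essentially the quadratic specialization of the paper's first step: the paper proves the statement by proving the general Theorem~\ref{thm:SMD_risk-sensitive} via the completion-of-squares identity of Lemma~\ref{lem:nice_iden} (an iterated application of Lemma~\ref{lem:ccos}), which in the Gaussian case is exactly your observation that the $+\tfrac12 w^TX^TXw$ from the risk term cancels the $-\tfrac12 w^TX^TXw$ from the likelihood, leaving a Gaussian integral with curvature $\tfrac1\eta I$. Your identification $c=w_T/\eta$ at the LMS policy and the conservation-law benchmark are also correct. The gap is in the final step, which is the actual content of the theorem. You correctly note that the unconstrained minimizer of $\Phi$ is the non-causal smoother, so LMS does \emph{not} minimize $\Phi$ realization-by-realization and causal optimality genuinely needs an argument --- but your proposed argument does not close it. Your dynamic-programming recursion $V_i=\min_{z_i}\mathbb{E}\left[\tfrac12 z_i^2+V_{i+1}\right]$ is additive, whereas the objective is $\mathbb{E}\exp(\Phi)$, not $\mathbb{E}\Phi$; once you average over the future observation $y_i$ the value function must propagate multiplicatively, $\min_{z_i}\mathbb{E}\left[e^{(\cdot)}V_{i+1}(\cdot)\right]$, and the resulting Gaussian integrals of exponentials of indefinite quadratics (whose convergence is where $\eta\|x_i\|^2<1$ actually enters) are precisely the hard part. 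The ``innovations-orthogonality property'' that is supposed to kill the cross term is asserted, not established, so the induction is not closed.

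The paper closes this step without any dynamic programming or expectation over future data, via a second algebraic identity (Lemma~\ref{lem:nice_iden2}): the per-step cost left after integrating out $w$ is written in terms of $w_i$, which depends on $y_i$ and hence cannot be controlled by a causal $z_i$; the identity re-centers it at $w_{i-1}$. In the quadratic case the per-step exponent becomes $\tfrac12(y_i-x_i^Tw_{i-1})^2-\tfrac{\eta}{2}\|x_i\|^2(y_i-z_i)^2-\tfrac12(z_i-x_i^Tw_{i-1})^2$ (note the published Lemma~\ref{lem:nice_iden2} has a sign slip on the inner-product term; it should enter with a minus), and the only piece that is simultaneously sign-definite and a function of quantities known to a causal $z_i$ is $D_\ell(y_i-x_i^Tw_{i-1},y_i-z_i)=\tfrac12(z_i-x_i^Tw_{i-1})^2\ge 0$, which the choice $z_i=x_i^Tw_{i-1}$ drives to its global minimum of zero; substituting into the recursion (\ref{gen_rec}) then yields the LMS iterates. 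If you wish to retain your DP route, the task is to show that after taking $\mathbb{E}_{y_i\mid y_{<i}}$ of the multiplicative value function the residual $z_i$-dependence reduces to exactly this nonnegative term --- which is what the paper's second identity accomplishes purely algebraically.
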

We should further remark that no larger exponent than $1/2$ is possible (no algorithm can attain a finite cost if the exponent is larger than $1/2$). 

The following result generalizes the risk-sensitive optimality of SGD for quadratic errors, to that of SMD for general Bregman-divergence errors.
\begin{theorem}\label{thm:SMD_risk-sensitive}
Consider the model $y_i=x_i^Tw+v_i, i\geq 1$, where $w$ and $\{v_i\}$ are independent random variables with distributions $w\sim e^{-\frac{1}{\eta}D_\psi(\cdot,w_0)}$ and $v_i\sim e^{-l(\cdot)}$. Further, suppose that $\{x_i\}$ are persistently exciting, and $\psi-\eta L_i$ is strictly convex for all $i$. Then the solution to the following optimization problem
\[ \min_{\{z_i\}} \mathbb{E}_{|\{y_i\}}\exp\left(\sum_{i=1}^TD_{\ell}(y_i-x_i^Tw,y_i-z_i)\right), \]
where the expectation is taken over $w$ conditioned on the observations, and $z_i$ is only allowed to depend on observations up to time $i-1$, is given by $z_i=x_i^Tw_{i-1}$, where $\{w_{i}\}$ are the SMD iterates.
\end{theorem}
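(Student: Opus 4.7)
My plan is to parallel the structure of the Gaussian/quadratic proof (Theorem~\ref{thm:SGD_risk-sensitive}), with Bregman divergences and exponential-family densities replacing quadratics and Gaussians throughout. First, I would express the conditional expectation as a ratio of two integrals over $w$ via Bayes' rule: with $p(w) \propto e^{-D_\psi(w,w_0)/\eta}$ and $p(y_i \mid w) \propto e^{-\ell(y_i - x_i^T w)}$, the numerator is
\[ \int \exp\!\left(\sum_{i=1}^T D_\ell(y_i - x_i^T w,\, y_i - z_i) - \tfrac{1}{\eta}D_\psi(w,w_0) - \sum_{i=1}^T \ell(y_i - x_i^T w)\right) dw, \]
and the denominator is this same integral with the first sum omitted. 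Since the denominator is $\{z_i\}$-free, it suffices to minimize the numerator.

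Next, I would apply the elementary Bregman identity $D_\ell(v, y-z) - \ell(v) = -\ell(y-z) - \ell'(y-z)\bigl(v - (y-z)\bigr)$ with $v = y_i - x_i^T w$. The $\ell(y_i - x_i^T w)$ terms cancel pointwise, and the exponent collapses to an affine perturbation of $-D_\psi(w,w_0)/\eta$. Completing the Bregman ``square'' (a direct one-sided specialization of Lemma~\ref{lem:ccos}) then recasts this exponent as $-D_\psi(w, w^*)/\eta$ plus a $w$-independent remainder, where $w^*$ is the unique solution of
\[ \nabla \psi(w^*) = \nabla \psi(w_0) + \eta \sum_{i=1}^T \ell'(y_i - z_i)\, x_i. \]
Comparing with the SMD recursion (\ref{smd_linmod}) and telescoping, the SMD prediction $z_i = x_i^T w_{i-1}$ produces exactly $w^* = w_T$, the final SMD iterate.

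With the numerator now of the form $e^{-G(\{z_i\})}\!\int e^{-D_\psi(w,w^*)/\eta}\, dw$, I would finish by backward induction on $i = T, T-1, \ldots, 1$, optimizing each $z_i$ subject to the causality constraint that it depend only on $y_1, \ldots, y_{i-1}$. At each stage, the first-order stationarity condition in $z_i$, combined with the ``mirror-mean'' identity $\mathbb{E}_{w \sim e^{-D_\psi(\cdot, \tilde w)/\eta}}\nabla\psi(w) = \nabla\psi(\tilde w)$ recalled at the end of Section~\ref{sec:background}, forces the optimizer to satisfy the SMD update rule. The strict convexity of $\psi - \eta L_i$ together with the persistent excitation of $\{x_i\}$ would then upgrade this stationary point to the unique global minimum.

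The hardest part will be this last step. Unlike in the Gaussian/quadratic case, where the normalizer $\int e^{-D_\psi(w,w^*)/\eta}\, dw$ is a constant independent of $w^*$, for a general potential $\psi$ this integral depends on $w^*$ through the curvature of $\psi$ away from $w_0$, and this dependence couples the first-order conditions across the indices in a subtle way---in particular, the mirror-mean identity supplies $\mathbb{E}[\nabla\psi(w)]$ rather than $\mathbb{E}[w]$, and reconciling the two under causality is where the technical work lies. An appealing alternative that sidesteps this issue is to exploit the known duality between risk-sensitive estimation and $H^\infty$-optimal estimation: paired with the minimax result of Theorem~\ref{thm:smd_minimix-optimal}, this would yield the present theorem as a direct corollary, though establishing the duality itself in the exponential-family/Bregman generality is a nontrivial step in its own right.
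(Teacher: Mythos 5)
Your first two steps track the paper's own argument (its Lemma~\ref{lem:nice_iden}) almost exactly: you write the conditional expectation as a $z$-free denominator times an integral whose exponent is $-\frac{1}{\eta}D_\psi(w,w_0)-\sum_i\ell(y_i-x_i^Tw)+\sum_i D_\ell(y_i-x_i^Tw,y_i-z_i)$, and then collapse the $w$-dependence into a single term $-\frac{1}{\eta}D_\psi(w,w_*)$ with $\nabla\psi(w_*)=\nabla\psi(w_0)+\eta\sum_i x_i\ell'(y_i-z_i)$. Your single global completion of the Bregman square is a legitimate, arguably cleaner, substitute for the paper's telescoped local identity, and your $w_*$ is exactly the paper's $w_T$ generated by the recursion (\ref{gen_rec}) for \emph{generic} $z_i$ (note: the recursion uses $\ell'(y_i-z_i)$, not $\ell'(y_i-x_i^Tw_{i-1})$ as in (\ref{smd_linmod}); the two coincide only after the optimal $z_i$ is identified). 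Up to this point you are fine.

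The gap is in the final step. After $w$ is integrated out, what remains is a sum over $i$ of terms involving $w_i$, $w_{i-1}$, and $z_i$ (your $G(\{z_i\})$), and the entire difficulty of the theorem is to minimize this over \emph{causal} $z_i$. The paper's resolution is a second purely algebraic identity (Lemma~\ref{lem:nice_iden2}) that rewrites $\ell(y_i-x_i^Tw_i)-D_\ell(y_i-x_i^Tw_i,y_i-z_i)$ so that the only term a causal $z_i$ can knowledgeably control is $-D_\ell(y_i-x_i^Tw_{i-1},y_i-z_i)$, which is nonpositive and is driven to zero by the causal choice $z_i=x_i^Tw_{i-1}$; substituting this back into (\ref{gen_rec}) turns the generic recursion into SMD. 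Your proposed substitute---backward induction plus first-order stationarity plus the mirror-mean identity---is not yet an argument: once $w$ has been integrated out there is no expectation left for $\mathbb{E}\nabla\psi(w)=\nabla\psi(w_0)$ to act on, and differentiating the log-normalizer $\log\int e^{-\frac{1}{\eta}D_\psi(w,w_*)}dw$ with respect to $w_*$ produces a term of the form $\nabla^2\psi(w_*)\left(\mathbb{E}[w]-w_*\right)$, which the mirror-mean identity (which controls $\mathbb{E}[\nabla\psi(w)]$, not $\mathbb{E}[w]$) does not resolve. You have correctly spotted that this normalizer is not constant in $w_*$ for general $\psi$---a subtlety the paper itself glosses over when it absorbs it into $C'$---but flagging the difficulty is not the same as resolving it, and the $H^\infty$/risk-sensitive duality fallback you mention is likewise asserted rather than established. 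To close the argument you need the shift-to-$w_{i-1}$ identity of Lemma~\ref{lem:nice_iden2} (or an equivalent), which isolates a single sign-definite, causally minimizable term; without it the proof is incomplete.
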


\subsection{Proof of Theorem \ref{thm:SMD_risk-sensitive}}

The expected exponential cost that needs to be minimized in Theorem \ref{thm:SMD_risk-sensitive} is given by
\begin{multline*}
C\int\exp\left(-\frac{1}{\eta}D_\psi(w,w_0)-\sum_{i=1}^T\ell(y_i-x_i^Tw) \right. \\
\left. +\sum_{i=1}^TD_{\ell}(y_i-x_i^Tw,y_i-z_i)\right)dw,
\end{multline*}
where $C$ is a normalization constant that guarantees we are integrating
the cost against a conditional distribution. The challenge in evaluating the above integral over $w$ is that $w$ appears in all three terms of the exponent. In order to facilitate the computation of this integral, it will be useful to use the completion-of-squares formula of Lemma \ref{lem:ccos} to gather $w$ into a single term. The following lemma provides precisely what we need.

\begin{lemma} It holds that
{\small
\begin{multline*}
-\frac{1}{\eta}D_{\psi}(w,w_0)-\sum_{i=1}^T\ell(y_i-x_i^Tw)+\sum_{i=1}^TD_{\ell}(y_i-x_i^Tw,y_i-z_i) = \\
-\frac{1}{\eta}D_{\psi}(w,w_T)-\sum_{i=1}^T\left[\frac{1}{\eta}D_\psi(w_i,w_{i-1})+\ell(y_i-x_i^Tw_i)\right.\\
\left. -D_\ell(y_i-x_i^T,y_i-z_i)\right]
\end{multline*}
}
where the
$w_i$, $i=1,\ldots,T$ are given by the recursion
\be
\nabla\psi(w_i) = \nabla\psi(w_{i-1})+\eta x_i\ell'(y_i-z_i).
\label{gen_rec}
\ee
\label{lem:nice_iden}
\end{lemma}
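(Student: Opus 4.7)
The plan is to reduce the claimed identity, by simple algebra, to a clean telescoping application of the law of cosines \eqref{cosine}. The role played by the specific form of the recursion \eqref{gen_rec} is exactly to produce the linear-in-$w$ term needed to bridge $-\tfrac{1}{\eta}D_\psi(w,w_{i-1})$ and $-\tfrac{1}{\eta}D_\psi(w,w_i)$.

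The first step is to unpack every $D_\ell(\cdot,\,y_i-z_i)$ on both sides via $D_\ell(a,b) = \ell(a)-\ell(b)-\ell'(b)(a-b)$. On the LHS this gives
\[ D_\ell(y_i-x_i^Tw,\, y_i-z_i) = \ell(y_i-x_i^Tw) - \ell(y_i-z_i) + \ell'(y_i-z_i)(x_i^Tw - z_i), \]
whose leading $\ell(y_i-x_i^Tw)$ cancels the $-\sum_i \ell(y_i-x_i^Tw)$ already present, so $w$ survives on the LHS only inside $D_\psi(w,w_0)$ and inside linear terms $\ell'(y_i-z_i)\, x_i^Tw$. The analogous expansion on the RHS turns $\ell(y_i-x_i^Tw_i) - D_\ell(y_i-x_i^Tw_i, y_i-z_i)$ into $\ell(y_i-z_i) - \ell'(y_i-z_i)(x_i^Tw_i - z_i)$. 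The scalar $\pm\ell(y_i-z_i)$ and $\pm\ell'(y_i-z_i)\, z_i$ contributions then match term-by-term, and the whole identity collapses to the purely-$\psi$ statement
\[ -\tfrac{1}{\eta}D_\psi(w,w_0) + \sum_{i=1}^T \ell'(y_i-z_i)\, x_i^Tw \;=\; -\tfrac{1}{\eta}D_\psi(w,w_T) - \tfrac{1}{\eta}\sum_{i=1}^T D_\psi(w_i,w_{i-1}) + \sum_{i=1}^T \ell'(y_i-z_i)\, x_i^Tw_i. \]

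I would establish this residual identity by induction on $T$, with the base case $T=0$ trivial. For the inductive step, observe that \eqref{gen_rec} is equivalent to
\[ \nabla\psi(w_i) - \nabla\psi(w_{i-1}) = \eta\, x_i\, \ell'(y_i - z_i); \]
substituting this into the law of cosines \eqref{cosine} applied with $w'=w_{i-1}$ and $w''=w_i$ gives precisely
\[ -\tfrac{1}{\eta}D_\psi(w, w_{i-1}) + \ell'(y_i-z_i)\, x_i^T w = -\tfrac{1}{\eta}D_\psi(w, w_i) - \tfrac{1}{\eta}D_\psi(w_i, w_{i-1}) + \ell'(y_i-z_i)\, x_i^T w_i, \]
which is the one-step version of the target. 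Chaining it from $i=1$ to $i=T$ (equivalently, the straightforward induction) completes the proof.

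The only real obstacle is bookkeeping: making sure the scalar pieces $\ell(y_i-z_i)$ and $\ell'(y_i-z_i)\, z_i$ cancel cleanly on the two sides once $D_\ell$ is expanded, and that the coefficient $\ell'(y_i-z_i)$ of the linear term matches exactly the drift prescribed by the recursion. Isolating these pieces first via the $D_\ell$ expansion is what reduces the problem to the telescoping Bregman identity above. Nothing beyond differentiability of $\psi$ and $\ell$ is used in the algebra itself; strict convexity of $\psi$ enters only implicitly, to guarantee that \eqref{gen_rec} uniquely determines each $w_i$ from $w_{i-1}$.
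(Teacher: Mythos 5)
Your proof is correct and is essentially the paper's argument: the paper also telescopes the same one-step identity from $i=1$ to $T$, and your expansion of the $D_\ell$ terms combined with the law of cosines \eqref{cosine} is exactly the ``direct verification'' of that local identity which the paper leaves to the reader. The only cosmetic difference is that you strip out the $\ell$-dependent scalars first and telescope only the residual Bregman-$\psi$ identity, whereas the paper telescopes the full local identity in one piece.
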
 
\begin{proof}
The proof is based on telescopically summing the local identity
{\small
\begin{multline*}
-\frac{1}{\eta}D_\psi(w,w_{i-1})-\ell(y_i-x_i^Tw)+D_\ell(y_i-x_i^Tw,y_i-z_i) = \\
-\frac{1}{\eta}D_\psi(w,w_{i})-\frac{1}{\eta}D_\psi(w_i,w_{i-1})\\
-\ell(y_i-x_i^Tw_i)+D_\ell(y_i-x_i^Tw_i,y_i-z_i),
\end{multline*}
}
from $i=1$ to $i=T$, where the $w_i$ are given through the recursion (\ref{gen_rec}). This local identity can be
either verified directly or obtained through two successive uses of Lemma~\ref{lem:ccos}.
\end{proof}

As promised, Lemma \ref{lem:nice_iden} gathers $w$ into a single term so that the integral over $w$ can be performed. Once this integral is performed, we are left with the following cost function
{\small
\begin{multline*}
C'\exp\left(-\sum_{i=1}^T\frac{1}{\eta}D_\psi(w_i,w_{i-1})+\ell(y_i-x_i^Tw_i)\right. \\
\left.-D_\ell(y_i-x_i^Tw_i,y_i-z_i)\right),
\end{multline*}
}
where $C'$ is a constant obtained after integrating out $w$. The above cost function must be recursively minimized over the $z_i$, which are only allowed to be functions of $\{y_j,j<i\}$, respectively. It is not clear how to do so from the above expression. The next lemma provides an identity that makes this recursive minimization straightforward. 
\begin{lemma} It holds that
{\small
\begin{multline*}
\ell(y_i-x_i^Tw_i)-D_\ell(y_i-x_i^Tw_i,y_i-z_i) = \\
\ell(y_i-x_i^Tw_{i-1})+\frac{1}{\eta}\left(\nabla\psi(w_i)-\nabla\psi(w_{i-1})
\right)^T(w_i-w_{i-1})\\
-D_\ell(y_i-x_i^Tw_{i-1},y_i-z_i).
\end{multline*}
}
\label{lem:nice_iden2}
\end{lemma}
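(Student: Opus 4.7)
The plan is to prove Lemma~\ref{lem:nice_iden2} by a direct algebraic expansion that reduces the claim to a single application of the recursion \eqref{gen_rec}. The only tools needed are the definition of the Bregman divergence, $D_\ell(a,b) = \ell(a) - \ell(b) - \ell'(b)(a-b)$, and the recursion itself; there is no hidden appeal to Lemma~\ref{lem:ccos} or to the law of cosines \eqref{cosine}.

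First, I would expand both Bregman-divergence terms using the definition. A useful intermediate observation is that, for any $u$,
\[
\ell(u) - D_\ell(u, y_i - z_i) \;=\; \ell(y_i - z_i) + \ell'(y_i - z_i)\bigl((y_i - z_i) - u\bigr).
\]
Applying this with $u = y_i - x_i^T w_i$ simplifies the left-hand side of the lemma to $\ell(y_i - z_i) + \ell'(y_i - z_i)(z_i - x_i^T w_i)$, and applying it with $u = y_i - x_i^T w_{i-1}$ simplifies the $\ell$-minus-$D_\ell$ portion of the right-hand side to $\ell(y_i - z_i) + \ell'(y_i - z_i)(z_i - x_i^T w_{i-1})$. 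The $\ell(y_i - z_i)$ contributions then cancel, reducing the claimed identity to a purely linear relation comparing $\ell'(y_i - z_i)\,x_i^T(w_i - w_{i-1})$ with $\tfrac{1}{\eta}\bigl(\nabla\psi(w_i) - \nabla\psi(w_{i-1})\bigr)^T (w_i - w_{i-1})$.

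The closing step is to invoke the recursion \eqref{gen_rec}, namely $\nabla\psi(w_i) - \nabla\psi(w_{i-1}) = \eta\,x_i\,\ell'(y_i - z_i)$. Taking the inner product of both sides with $(w_i - w_{i-1})$ and dividing by $\eta$ converts the mirror-map term into exactly $\ell'(y_i - z_i)\,x_i^T(w_i - w_{i-1})$, which matches the expression produced by the Bregman expansion and closes the argument. I do not anticipate any substantive obstacle here; the only care required is consistent sign-bookkeeping through the two Bregman expansions so that the direction $(w_i - w_{i-1})$ versus $(w_{i-1} - w_i)$ comes out in agreement with the recursion. As a byproduct, the computation makes transparent the role of the middle term $\tfrac{1}{\eta}(\nabla\psi(w_i) - \nabla\psi(w_{i-1}))^T(w_i - w_{i-1})$: it is precisely the correction needed to swap $y_i - x_i^T w_i$ for $y_i - x_i^T w_{i-1}$ inside both the $\ell$ and the $D_\ell$ evaluations, using the mirror-map update that defines $w_i$.
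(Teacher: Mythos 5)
Your overall strategy --- expand both Bregman terms via $D_\ell(a,b)=\ell(a)-\ell(b)-\ell'(b)(a-b)$, cancel the common $\ell(y_i-z_i)$, and finish with one inner product against the recursion (\ref{gen_rec}) --- is exactly the ``tedious but straightforward calculation'' the paper has in mind, and it is the right way to attack this identity. The difficulty is in the sign bookkeeping, which you explicitly defer and which is precisely where the argument breaks. First, your intermediate formula is wrong as displayed: the definition gives $\ell(u)-D_\ell(u,y_i-z_i)=\ell(y_i-z_i)+\ell'(y_i-z_i)\bigl(u-(y_i-z_i)\bigr)$, not $\ell'(y_i-z_i)\bigl((y_i-z_i)-u\bigr)$. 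Your two applications are consistent with the corrected formula, so treat that as a typo; but then the reduction yields, for the difference of the two $\ell$-minus-$D_\ell$ expressions,
\[
\ell'(y_i-z_i)\bigl[(z_i-x_i^Tw_i)-(z_i-x_i^Tw_{i-1})\bigr]=-\,\ell'(y_i-z_i)\,x_i^T(w_i-w_{i-1}),
\]
with a minus sign, because moving $w$ along $x_i$ decreases $y_i-x_i^Tw$. Meanwhile (\ref{gen_rec}) gives $\frac{1}{\eta}\bigl(\nabla\psi(w_i)-\nabla\psi(w_{i-1})\bigr)^T(w_i-w_{i-1})=+\,\ell'(y_i-z_i)\,x_i^T(w_i-w_{i-1})$. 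These two quantities are negatives of each other, not equal, so your closing step does not close.

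What the careful computation actually shows is that the identity holds with the middle term negated, i.e.\ with $\frac{1}{\eta}\left(\nabla\psi(w_i)-\nabla\psi(w_{i-1})\right)^T(w_{i-1}-w_i)$; as printed, the lemma appears to carry a sign typo. A one-line sanity check: take $\psi(w)=\frac{1}{2}\|w\|^2$, $\ell(v)=\frac{1}{2}v^2$, $\eta=x_i=y_i=1$, $w_{i-1}=z_i=0$, so that (\ref{gen_rec}) gives $w_i=1$; then the left-hand side equals $-\frac{1}{2}$ while the right-hand side as stated equals $\frac{3}{2}$ (and equals $-\frac{1}{2}$ once the middle term's sign is flipped). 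So either flag the sign in the statement or, at the very least, do not assert that the two linear terms ``match'' --- carrying out the expansion you outline is exactly what reveals that they do not.
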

\begin{proof}
This can be verified by perhaps tedious, but straightforward, calculations.
\end{proof}

In view of Lemma \ref{lem:nice_iden2}, the cost function to recursively minimize is 
{\small
\begin{multline*}
C'\exp\left(-\sum_{i=1}^T\frac{1}{\eta}D_\psi(w_i,w_{i-1})+\ell(y_i-x_i^Tw_{i-1})\right. \\
+\frac{1}{\eta}\left(\nabla\psi(w_i)-\nabla\psi(w_{i-1})
\right)^T(w_i-w_{i-1})\\
\left.-D_\ell(y_i-x_i^Tw_{i-1},y_i-z_i)\right).
\end{multline*}
}

Note that, at any time $i$, the only
term that $z_i$ has control over (in the sense that it is a term that depends only on past $y_j$) is the term
\[ D_\ell(y_i-x_i^Tw_{i-1},y_i-z_i). \]
(The other terms that are influenced by $z_i$, such as $w_i$, are influenced also by $y_i$---see (\ref{gen_rec})---so that $z_i$ cannot knowledgeably minimize them.) The term $D_\ell(y_i-x_i^Tw_{i-1},y_i-z_i)$ can be minimized, and in fact set to zero, by taking
\be
z_i = x_i^Tw_{i-1},
\ee
which when plugging into (\ref{gen_rec}) yields SMD. {\em This completes the proof.} 
(The attentive reader will have noticed that we needed Lemma \ref{lem:nice_iden2} since it was not clear how to minimize $D_\ell(y_i-x_i^Tw_i,y_i-z_i)$ over $z_i$, since we could not have taken $z_i = x_i^Tw_i$ as $w_i$ depends on $y_i$ and $z_i$ is not allowed to.)

\subsection{Symmetric SMD (SSMD)}

Our proof of the risk-sensitive optimality of SMD has led
 us to an alternative, and more symmetric version, of the algorithm that we refer to as symmetric SMD (or SSMD) and which may be of independent interest. The SSMD iterations are given by
\be
\nabla\psi (w_i) = \nabla\psi (w_{i-1})+\eta x_i \left(\ell'(y_i)-\ell'(x_i^Tw_{i-1})\right),~~~~w_0.
\label{SSMD}
\ee
SSMD satisfies the following
risk-sensitive optimality.
\begin{theorem}\label{thm:SSMD_risk-sensitive}
Consider the model $y_i=x_i^Tw+v_i, i\geq 1$, where $w$ and $\{v_i\}$ are independent random variables with $w|\{y_i\} \sim e^{-\frac{1}{\eta}D_\psi(\cdot,w_0)-D_\ell(x_i^T\cdot,y_i)}$. Further, suppose that $\{x_i\}$ are persistently exciting, and $\psi-\eta L_i$ is strictly convex for all $i$. Then the solution to the following optimization problem
\[ \min_{\{z_i\}} \mathbb{E}_{|\{y_i\}}\exp\left(\sum_{i=1}^TD_\ell(x_i^Tw,z_i)
\right), \]
where the expectation is taken over $w$ conditioned on the observations, and $z_i$ is only allowed to depend on observations up to time $i-1$, is given by $z_i=x_i^Tw_{i-1}$, where $\{w_{i}\}$ are the SSMD iterates.
\end{theorem}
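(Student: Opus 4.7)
The plan is to mirror the proof of Theorem~\ref{thm:SMD_risk-sensitive}: express the expected exponential cost as an integral over $w$, use Lemma~\ref{lem:ccos} iteratively to gather all $w$-dependence into a single $D_\psi(w,w_T)$ term, integrate out $w$, and then manipulate the remaining cost with Bregman identities so that the role of the predictor $z_i$ is isolated into a single nonnegative term it can causally set to zero.

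First, I would write
\begin{multline*}
\mathbb{E}_{|\{y_i\}}\exp\left(\sum_{i=1}^T D_\ell(x_i^Tw,z_i)\right) \\
= C\!\int\!\exp\!\left(-\tfrac{1}{\eta}D_\psi(w,w_0)-\sum_{i=1}^T D_\ell(x_i^Tw,y_i)+\sum_{i=1}^T D_\ell(x_i^Tw,z_i)\right)\!dw,
\end{multline*}
and exploit a cancellation specific to the SSMD cost. Since $D_\ell(a,b)=\ell(a)-\ell(b)-\ell'(b)(a-b)$, the two $\ell(x_i^Tw)$ terms in $-D_\ell(x_i^Tw,y_i)+D_\ell(x_i^Tw,z_i)$ cancel, leaving an affine function of $w$ equal to $(\ell'(y_i)-\ell'(z_i))x_i^Tw$ plus terms not depending on $w$. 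This is the reason the SSMD update naturally takes the symmetric form $\ell'(y_i)-\ell'(x_i^Tw_{i-1})$ rather than $\ell'(y_i-x_i^Tw_{i-1})$; no such cancellation is available in the proof of Theorem~\ref{thm:SMD_risk-sensitive}.

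Next, I would apply Lemma~\ref{lem:ccos} inductively to absorb the affine term $(\ell'(y_i)-\ell'(z_i))x_i^Tw$ into $-\tfrac{1}{\eta}D_\psi(w,w_{i-1})$, yielding $-\tfrac{1}{\eta}D_\psi(w,w_i)$ plus a $w$-independent residual, where gradient-matching forces the auxiliary sequence to obey
\begin{equation*}
\nabla\psi(w_i) = \nabla\psi(w_{i-1}) + \eta x_i\bigl(\ell'(y_i)-\ell'(z_i)\bigr).
\end{equation*}
After $T$ such steps the exponent has the form $-\tfrac{1}{\eta}D_\psi(w,w_T)+F$ with $F$ depending only on $\{w_i,y_i,z_i\}$. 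Integrating out $w$ (treating the resulting partition function as a constant, as is done in the proof of Theorem~\ref{thm:SMD_risk-sensitive}) leaves the cost as $C'e^F$.

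Finally, I would rewrite $F$ in a form that exposes the role of $z_i$. By collecting the completion-of-squares residuals with the affine constants, invoking the three-point identity $D_\psi(w_i,w_{i-1})+D_\psi(w_{i-1},w_i)=(\nabla\psi(w_i)-\nabla\psi(w_{i-1}))^T(w_i-w_{i-1})$ together with the recursion above, and rewriting $D_\ell(x_i^Tw_i,\cdot)$ in terms of $D_\ell(x_i^Tw_{i-1},\cdot)$, I expect $F$ to reduce to an expression of the form
\begin{equation*}
F = \sum_{i=1}^T\Bigl[D_\ell(x_i^Tw_{i-1},z_i) - D_\ell(x_i^Tw_{i-1},y_i) + \tfrac{1}{\eta}D_\psi(w_{i-1},w_i)\Bigr].
\end{equation*}
The only nonnegative summand in which $z_i$ appears without also being entangled with the unknown $y_i$ is $D_\ell(x_i^Tw_{i-1},z_i)$, which is driven to zero by the causal choice $z_i=x_i^Tw_{i-1}$; plugging this into the recursion recovers exactly the SSMD update~(\ref{SSMD}). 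The hard part, inherited from the proof of Theorem~\ref{thm:SMD_risk-sensitive}, will be justifying rigorously that this choice is optimal despite the fact that $z_i$ also influences the downstream iterates $\{w_j\}_{j\geq i}$ through the recursion; the argument rests on the observation that the causality constraint prevents $z_i$ from anticipating $y_i$ to trade off the controllable $D_\ell(x_i^Tw_{i-1},z_i)$ term against the downstream $D_\psi$ terms, so driving this single term to zero is the best any admissible predictor can do.
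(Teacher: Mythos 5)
Your proposal is correct and follows essentially the route the paper intends (the paper omits this proof, saying only that it parallels Theorem~\ref{thm:SMD_risk-sensitive}): write the conditional expectation as an integral over $w$, gather the $w$-dependence into a single $-\frac{1}{\eta}D_\psi(w,w_T)$ by completion of squares, integrate out $w$, and isolate $D_\ell(x_i^Tw_{i-1},z_i)$ as the only nonnegative term that $z_i$ can causally drive to zero. I checked that your recursion $\nabla\psi(w_i)=\nabla\psi(w_{i-1})+\eta x_i\bigl(\ell'(y_i)-\ell'(z_i)\bigr)$ and your residual $F=\sum_{i=1}^T\bigl[D_\ell(x_i^Tw_{i-1},z_i)-D_\ell(x_i^Tw_{i-1},y_i)+\tfrac{1}{\eta}D_\psi(w_{i-1},w_i)\bigr]$ are exactly right, and the causality step you flag as delicate is no weaker than the corresponding step in the paper's own proof of Theorem~\ref{thm:SMD_risk-sensitive}.
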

\begin{proof}
The proof is similar to that of Theorem \ref{thm:SMD_risk-sensitive} and is omitted for brevity.
\end{proof}

We note that the difference between SMD and SSMD is that the
noise is now distributed according to $v_i\sim e^{-D_\ell(x_i^Tw,y_i)}$, rather than $v_i\sim e^{-\ell(y_i-x_i^Tw)}$, and that the exponent of the cost function is $D_\ell(x_i^Tw,z_i)$, rather than $D_\ell(y-x_i^Tw,y_i-z_i)$. The distributions and costs for SSMD appear to be more natural.  
\section{Other Stochastic Results}\label{sec:other_stochastic}
In the previous sections, we showed several fundamental deterministic and stochastic properties of SMD. One may ask how do these results relate to the conventional mean-square convergence results, such as \cite{nemirovski2009robust}. It turns out that the fundamental identity (conservation law \eqref{globalcons}) of SMD allows proving such stochastic convergence results in a direct way (which avoids appealing to stochastic differential equations and ergodic averaging) \cite{azizan2019characterization}. 

As mentioned before, for vanishing step size, convergence of any algorithm is not surprising, and is in fact trivial (because you are not updating anymore). However, the more interesting question is whether the algorithm converges to anything interesting. It turns out that when the data points are generated according to a stochastic model with white noise, SMD converges to the ``true'' parameter. More specifically, consider a model $y_i=x_i^Tw+v_i, i\geq 1,$ where $v_i$ are iid with $\E{v_i}=0$ and $\E{v_i^2}=\sigma^2$, and the inputs $x_i$ are ``persistently exciting,'' i.e., for any $\delta>0$, there exists $T>0$ s.t. $\sum_{i=1}^T x_ix_i^T\succeq\delta I$. Note that this is different from the setting of Theorem~\ref{thm:SMD_risk-sensitive}, in that the noises $v_i$ need not be Gaussian or from the the exponential family (the only assumption is whiteness), and the parameter $w$ is deterministic.
One can show that SMD with decaying step size indeed converges to $w$, under suitable conditions on the step size sequence.
\begin{theorem}

Consider the model $y_i=x_i^Tw+v_i, i\geq 1,$ where $\E{v_i}=0$, $\E{v_iv_j}=\sigma^2\delta_{ij}$, and the $x_i$ are persistently exciting. The stochastic mirror descent iterates for any strongly convex potential $\psi(\cdot)$, and a convex loss $\ell(\cdot)$ with a unique root at $0$, converge to $w$ in a mean-square sense, if the
the step size sequence $\{\eta_i\}$ satisfies $\sum_{i=1}^{\infty} \eta_i=\infty, \sum_{i=1}^{\infty} \eta_i^2<\infty$.
\end{theorem}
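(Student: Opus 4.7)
The plan is to treat $V_i := D_\psi(w, w_i)$ as a stochastic Lyapunov function and apply a Robbins--Siegmund nonnegative almost-supermartingale argument, using the local conservation law (\ref{localcons}) as the one-step identity (with $\eta_i$ in place of $\eta$). Rearranging that identity gives
\[
V_i = V_{i-1} + \eta_i \ell(v_i) - \eta_i D_{L_i}(w, w_{i-1}) - E_i(w_i, w_{i-1}),
\]
and for the linear model evaluated at the true parameter $w$, we have $y_i - x_i^T w = v_i$, so setting $\epsilon_i := x_i^T(w - w_{i-1})$ the prediction-error term collapses to the one-dimensional $D_{L_i}(w,w_{i-1}) = D_\ell(v_i, v_i + \epsilon_i) \ge 0$. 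The residual $E_i$ is nonnegative once $\eta_i$ is small enough that $\psi - \eta_i L_i$ is convex, which is eventually the case because $\psi$ is strongly convex and $\eta_i \to 0$.

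Next I would take conditional expectation of the identity given $\mathcal{F}_{i-1}$, exploiting the fact that $v_i$ is zero-mean and independent of the past. From the SMD update $\nabla\psi(w_i) = \nabla\psi(w_{i-1}) + \eta_i x_i \ell'(v_i + \epsilon_i)$ together with the inverse-gradient Lipschitz bound $\|w_i - w_{i-1}\| \le (\eta_i/\mu)\|x_i\|\,|\ell'(v_i+\epsilon_i)|$ coming from strong convexity of $\psi$, the $E_i$ term (quadratic in $w_i - w_{i-1}$) contributes only $O(\eta_i^2)$ in expectation, which together with the constant $\E{\ell(v_i)}$ yields an inequality
\[
\E{V_i \mid \mathcal{F}_{i-1}} \le V_{i-1} - \eta_i g(\epsilon_i) + C \eta_i^2,
\]
where $g(\epsilon) := \E{D_\ell(v_i, v_i + \epsilon)} \ge 0$, and a local quadratic lower bound $g(\epsilon) \ge c\epsilon^2$ follows from convexity of $\ell$ (with its unique root at $0$) combined with $\E{v_i^2} = \sigma^2$.

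Given this drift inequality, the Robbins--Siegmund theorem (whose hypotheses are met because $\sum_i \eta_i^2 < \infty$) will deliver almost-sure convergence of $V_i$ to some finite limit and, crucially, $\sum_i \eta_i g(\epsilon_i) < \infty$ almost surely and in expectation. To convert this weighted prediction-error summability into convergence of the parameter itself, I invoke persistent excitation: $\sum_{j=i}^{i+T_0-1} x_j x_j^T \succeq \delta I$, and since the iterate differences $\|w_j - w_{j-1}\|$ are $O(\eta_j)$ with $\sum_j \eta_j^2 < \infty$, the iterates are nearly constant across such a window, so $\sum_{j=i}^{i+T_0-1} \epsilon_j^2$ is bounded below by a constant multiple of $\delta\|w - w_{i-1}\|^2$ up to a vanishing correction. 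Combined with $\sum_i \eta_i = \infty$ this forces $\|w - w_i\| \to 0$, and strong convexity of $\psi$ finally upgrades the convergence of $V_i$ to $\E{\|w - w_i\|^2} \to 0$.

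The main obstacle I anticipate is establishing the quadratic lower bound $g(\epsilon) \ge c\epsilon^2$ and controlling the sign of the first-order drift term $\E{\ell'(v_i + \epsilon)}\cdot\epsilon$ for a genuinely \emph{general} convex loss $\ell$ paired with white (but not necessarily symmetric) noise. When $\ell$ is locally $C^2$ with $\ell''(0) > 0$ this follows from a Taylor expansion, and the moments $\E{\ell'(v_i)}$ and $\E{\ell'(v_i)^2}$ are well behaved; the fully general convex case appears to require a mild extra hypothesis (such as symmetric noise or local strong convexity of $\ell$ near its minimum), which I expect to be either invoked implicitly or absorbed into the step-size condition in the formal proof.
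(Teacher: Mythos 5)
The paper itself offers no proof of this theorem---it is stated without argument, with only the remark that the conservation law \eqref{globalcons} ``allows proving such stochastic convergence results in a direct way'' and a pointer to \cite{azizan2019characterization}---so there is no line-by-line comparison to make. Your overall strategy (use the local conservation law as a one-step Lyapunov identity for $V_i=D_\psi(w,w_i)$, apply Robbins--Siegmund, then invoke persistent excitation and strong convexity of $\psi$ to upgrade to mean-square convergence of the iterates) is exactly the ``direct way'' the paper gestures at, and the reduction $D_{L_i}(w,w_{i-1})=D_\ell(v_i,v_i+\epsilon_i)$ is correct.

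There is, however, a genuine gap in your central drift inequality, and it is larger than the ``mild extra hypothesis'' you defer to the end. The residual $E_i=D_{\psi-\eta_i L_i}(w_i,w_{i-1})+\eta_i L_i(w_i)$ is \emph{not} quadratic in $w_i-w_{i-1}$: its second summand $\eta_i\,\ell(y_i-x_i^Tw_i)$ is $O(\eta_i)$, not $O(\eta_i^2)$. Carrying the bookkeeping through, the conditional one-step change is
\begin{equation*}
\E{V_i-V_{i-1}\mid\mathcal{F}_{i-1}}=-\eta_i\,\epsilon_i\,h(\epsilon_i)+O(\eta_i^2),
\qquad h(\epsilon):=\E{\ell'(v_i+\epsilon)},
\end{equation*}
and writing $F(\epsilon):=\E{\ell(v_i+\epsilon)}$ one has $\epsilon\,h(\epsilon)=g(\epsilon)+F(\epsilon)-F(0)$. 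Your claimed bound $\E{V_i\mid\mathcal{F}_{i-1}}\le V_{i-1}-\eta_i g(\epsilon_i)+C\eta_i^2$ therefore silently discards the $O(\eta_i)$ term $\eta_i\left(F(0)-F(\epsilon_i)\right)$, which has the wrong sign whenever the convex function $F$ is not minimized at $\epsilon=0$, i.e.\ whenever $\E{\ell'(v_i)}\neq 0$. This is not merely a technical nuisance to be absorbed into the step-size condition: in that case the recursion is a stochastic approximation whose stable point is the minimizer of $F$, so the iterates converge to a biased limit and the theorem's conclusion (convergence to the true $w$) actually fails. A correct proof must either restrict to quadratic loss, assume $\E{\ell'(v_i)}=0$ (e.g.\ symmetric noise), or otherwise justify $\epsilon\,h(\epsilon)\ge c\,\epsilon^2$; the theorem as stated leaves this condition implicit, and your write-up should make it an explicit hypothesis rather than a parenthetical hope. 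The remaining steps (Robbins--Siegmund, the persistent-excitation window argument, and the passage from a.s.\ to mean-square convergence via uniform integrability) are standard but would also need the iterates, $\|x_i\|$, and $\ell'$ to be suitably bounded for the constant $C$ to be deterministic and summable against $\eta_i^2$.
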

The step size conditions $\sum_{i=1}^{\infty} \eta_i=\infty, \sum_{i=1}^{\infty} \eta_i^2<\infty$ are known as Robbins--Monro \cite{robbins1951stochastic} conditions.

\section{Conclusion}\label{sec:conclusion}

In this paper, we reviewed several fundamental properties of stochastic mirror descent (SMD) family of algorithms, and provided a new stochastic interpretation of them, namely, that they are risk-sensitive optimal. The result generalizes a known result in the literature about the special case of SGD (aka LMS). Our analysis inspired a new algorithm, which is a ``more symmetric'' variant of SMD. Future work may concern studying this new algorithm and its convergence properties in more detail.




\bibliographystyle{IEEEtran}
\bibliography{references}

\end{document}